\newcounter{num}[section]
\newenvironment{theorem}
{\refstepcounter{num}%
\bigskip\noindent\nopagebreak[4]{\bf Theorem~\arabic{section}.\arabic{num}. }\it}
\newenvironment{lemma}
{\refstepcounter{num}%
\bigskip\noindent\nopagebreak[4]{\bf Lemma~\arabic{section}.\arabic{num}. }\it}
\newenvironment{example}
{\refstepcounter{num}%
\bigskip\noindent\nopagebreak[4]{\bf Example~\arabic{section}.\arabic{num}. }}
\newenvironment{statement}
{\refstepcounter{num}%
\bigskip\noindent\nopagebreak[4]{\bf Statement~\arabic{section}.\arabic{num}. }\it}
\newcommand{\N}{{\mathbb{N}}}
\newcommand{\LL}{{\mathcal{L}}}
\newcommand{\Ss}{{\mathcal{S}}}
\newcommand{\V}{{\mathrm{V}}}
\newcommand{\A}{{\mathcal{A}}}
\newcommand{\B}{{\mathcal{B}}}
\renewcommand{\t}{{\tau}}
\newcommand{\s}{{\sigma}}
\begin{document}

\author{Artem N. Shevlyakov\footnote{The author was supported by Russian Fund of Fundamental Research (project 14-01-00068, the results of Sections 6,7)  and Russian Science Foundation (project 14-11-00085, the results of Section 4,5) }}

\title{New problems in universal algebraic geometry illustrated by boolean equations}

\maketitle

\abstract{We discuss new problems in universal algebraic geometry and explain them by boolean equations}

MSC: 03G05 (boolean algebras), 03C98 (applications of model theory).

\section{Introduction}

The process of solving equations is the central part of mathematics. The most general and important problems in this area are the following.
\begin{enumerate}
\item Is a given equation consistent over an algebraic structure (algebra for shortness) $\A$? 
\item Find all solution of a given equation over an algebra $\A$.
\end{enumerate}
There are many surveys and papers devoted to equations in various classes of algebras. Let us just mention about the survey~\cite{romankov_survey} for group equations.

However the recent achievements of universal algebraic geometry (see the papers~\cite{uniTh,uniTh_new,Plot4} by E.Daniyarova, A.Miasnikov, V.Remeslennikov, and B.Plotkin) allow us to pose new problems about equations (all required definitions may be found in Section~\ref{sec:basics} of the current paper).

\begin{enumerate}
\item[3.] {\it Systems of equations VS algebraic sets.} Let $Y$ be an algebraic set over an algebra $\A$. Obviously, there exist more than one systems of equations (systems for shortness) with the solution set $Y$. Let us fix a family of systems $\mathbf{S}$, and let $S(Y)\subseteq\mathbf{S}$ be all systems  with the solution set $Y$. It turns out that the numbers $|S(Y)|$ have a wide spread of values for almost all natural $\mathbf{S}$ (e.g. in~\cite{shevl_random_semilattice_equations} this fact was proved for semilattice equations). Thus, there arises a problem: is there an algebra $\A$ and a natural family $\mathbf{S}$ such that the variance of the set $\{|S(Y)|\}$ is minimal? 

The sense of this problem is the following. Suppose we want to generate random algebraic sets over an algebra $\A$ by a random generation of systems from $\mathbf{S}$. If the variance of the set $\{|S(Y)|\}$ is small,  the random distribution of algebraic sets becomes close to the uniform distribution.

\item[4.] {\it Irreducible algebraic sets.} Let $Y$ be an algebraic set over an algebra $\A$. Is there an algorithm that decides whether $Y$ is irreducible or not? If $Y$ is reducible, can we find its irreducible components? Can you find the average number of irreducible components of all algebraic sets in $\A^n$?

The importance of this problem is the following. According to~\cite{uniTh,uniTh_new}, the structure  of irreducible algebraic sets over $\A$ determines the universal theory of $\A$. Moreover, if $\A$ is finite  irreducible algebraic sets over $\A$ correspond to subalgebras of $\A$. 

\item[5.] {\it Isomorphic algebraic sets.} In~\cite{uniTh} it was defined  isomorphisms between algebraic sets. Namely, isomorphic algebraic sets have the same properties with respect to universal algebraic geometry.  For any algebra $\A$ one can pose the following problem: how many non-isomorphic algebraic sets are there in $\A^n$? The solution of this problem allows us to decide about the complexity of the class of all algebraic sets over $\A$.

\item[6.] {\it Equationally extremal algebras}. Let $\A_n$ be the class of $\LL$-algebras of order $n$ (for example, $\A_n$ is the class of all semilattices of order $n$) and $\mathbf{S}$ a finite set of systems. The problem is the following: find an algebra $\A\in\A_n$ such that the number of consistent systems from $\mathbf{S}$ is maximal (minimal) for $\A$.
\end{enumerate}
Let us refer to the papers, where the problems above were solved for some algebras. In~\cite{shevl_irr_alg_sets_over_LOS} we describe irreducible algebraic sets and compute the average number of irreducible components of algebraic sets over linearly ordered semilattices. In~\cite{shevl_random_semilattice_equations} for the class of semilattices of order $n$ it was described equationally extremal semilattices which have maximal (minimal) number of consistent equations. Above we mentioned about the paper~\cite{shevl_random_semilattice_equations}, where we consider the 3rd problem for semilattices. The obtained results of all papers above show that the problems 3--6 are nontrivial even for simple algebras. However, there exists a class of algebras, the class of boolean algebras, where the problems above have nice solutions.
 
{\it Thus, the aim of this paper is the solution of problems 3--5 in the class of finite boolean algebras} (the sixth problem is unreasonable for boolean algebras, since $|\A_n|\leq 1$ for any $n\in\N$). So the reader may consider this paper as a vast example for problems above.

Let us explain the plan of our paper. In Section~\ref{sec:basics} we give basics notions of universal algebraic geometry. Section~\ref{sec:transformations} contains the rules of transformations of equations over boolean algebras. Actually, any boolean system  $\Ss(X)$ in $n$ variables $X$  can be equivalently reduced to  an {\it orthogonal} system  $\Ss^\prime(Z)$ in $2^n$ variables $Z$. Solving the 3rd problem, we prove that any algebraic set defined by a system in $n$ variables is isomorphic to the solution set of a unique orthogonal system in $2^n$ variables.  

In Section~\ref{sec:irred} we describe irreducible algebraic sets over finite boolean algebras and decompose any algebraic set into a finite union of irreducible ones. In Section~\ref{sec:average_of_irred} we count the average number of irreducible components of algebraic sets over finite boolean algebras. In Section~\ref{sec:rank} we give the definition of a rank of irreducibility $IR(\Ss)$ of a system $\Ss$ and count the average rank of irreducibility of all orthogonal systems in $2^n$ variables. Thus, Sections~\ref{sec:irred}--\ref{sec:rank} solve the 4th problem for finite boolean algebras.

In Section~\ref{sec:isomorphic} we study the 5th problem and directly compute the number of pairs of isomorphic algebraic sets defined by orthogonal systems in $2^n$ variables.

\section{Basic notions}
\label{sec:basics}

Let $\LL=\{\vee^{(2)},\cdot^{(2)},\bar{ }^{\ (1)},0,1\}$ be a language of binary functional symbols $\vee,\cdot$ (join and meet), unary symbol~$\bar{ }$ (complement) and constant symbols $0,1$. Clearly, boolean algebras are algebraic structures of the language $\LL$ with natural interpretation of functional and constant symbols (see~\cite{monk} for more details). 

Recall that for any finite boolean algebra $\B$ there exists a number $r\geq 1$ such that $\B$ is isomorphic to the power set algebra on $r$ elements ($|\B|=2^r$). The number $r$ is called the  {\it rank} of a boolean algebra $\B$. We assume below that any boolean algebra $\B$ is nontrivial, i.e. $|\B|>1$.

An element $a$ is an  {\it atom (co-atom)} if  $\{ab\mid b\in\B\}=\{0,a\}$ (respectively, $\{a\vee b\mid b\in\B\}=\{a,1\}$). Remark that the rank of a finite boolean algebra is equal to the number of atoms (co-atoms).

\bigskip

Following~\cite{uniTh}, let us give the basic notions of algebraic geometry over boolean algebras.

Let $X=\{x_1,x_2,\ldots,x_n\}$ be a finite set of variables. A term $t(X)$ of the language $\LL$ is called an $\LL$-term. The set of all $\LL$-terms in variables $X$ is denoted by $\mathcal{T}_\LL(X)$. A {\it boolean equation}  is an atomic formula $\t(X)=\s(X)$ of the language $\LL$ ($\t,\s$ are $\LL$-terms). The examples of boolean equations are the following expressions: $x_i=x_j$, $x_1=1$, $x_1x_2=x_3\vee x_4$, $\bar{x}_1\vee x_2=\overline{x_3x_4}$.

A {\it system of equations}  ({\it system} for shortness) is an arbitrary set of boolean equations. The {\it set of all solutions (solution set)} of a system $\Ss$ over a boolean algebra  $\B$ is denoted by  $\V_\B(\Ss)$.

A set $Y\subseteq \B^n$ is ~\textit{algebraic} over a boolean algebra $\B$ if there exists a system  $\Ss$ such that $Y=\V_\B(\Ss)$. A nonempty algebraic set  $Y$ is {\it irreducible} if it is not a finite proper union of other algebraic sets. According to~\cite{uniTh}, it follows that each algebraic set $Y\subseteq \B^n$ is decomposable into a finite union of irreducible algebraic sets
\begin{equation}
Y=Y_1\cup Y_2\cup\ldots\cup Y_m\; (Y_i\nsubseteq Y_j\mbox{ for }i\neq j),
\label{eq:Y_irred_components}
\end{equation}
and the decomposition~(\ref{eq:Y_irred_components}) is unique up to the permutation of the sets $Y_i$. The sets $Y_i$ in~(\ref{eq:Y_irred_components}) are called the {\it irreducible components} of a set $Y$.

Let $Y=\V_\B(\Ss)$ be an algebraic set over a boolean algebra $\B$, and $\Ss$ depends on variables $X=\{x_1,x_2,\ldots,x_n\}$. One can define an equivalence relation $\sim_Y$ on $\mathcal{T}_\LL(X)$ as follows:  
\[
t(X)\sim_Y s(X)\Leftrightarrow t(P)=s(P) \mbox{ for each point $P\in Y$}.
\] 
The set of $\sim_Y$-equivalence classes is called the  {\it coordinate algebra} of  $Y$ and denoted by  $\Gamma_\B(Y)$. By the results of~\cite{uniTh}, it follows that $\Gamma_\B(Y)$ is a boolean algebra and generated by the elements  $x_1,x_2,\ldots,x_n$. In other words, all coordinate algebras are finitely generated, and, therefore,  {\it all coordinate algebras of algebraic sets over boolean algebras are finite}. The following statement describes the properties of coordinate algebras of irreducible algebraic sets.

\begin{theorem}
An algebraic set $Y$ is irreducible over a boolean algebra  $\B$ iff $\Gamma_\B(Y)$ is embedded into $\B$ 
\label{th:gamma_is_embedded_for_irr}
\end{theorem}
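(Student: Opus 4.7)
The natural angle is the point/homomorphism duality for the coordinate algebra. Recall that $\Gamma_\B(Y)=\mathcal{T}_\LL(X)/{\sim_Y}$ and that each point $P=(b_1,\ldots,b_n)\in Y$ determines a homomorphism $\phi_P:\Gamma_\B(Y)\to\B$ via $[t(X)]\mapsto t(P)$ (well defined because $t\sim_Y s$ forces $t(P)=s(P)$). Conversely, any homomorphism $\phi:\Gamma_\B(Y)\to\B$ yields a point $P_\phi=(\phi([x_1]),\ldots,\phi([x_n]))\in Y$, since any equation $t=s$ that holds throughout $Y$ lies in $\sim_Y$ and is therefore trivialized by $\phi$. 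Note that $\Gamma_\B(Y)$ is a finite boolean algebra, so its congruence lattice is finite; I will use this in the harder direction.

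\textbf{Sufficiency ($\Leftarrow$).} Suppose $\iota:\Gamma_\B(Y)\hookrightarrow \B$ is an embedding and set $P_0=P_\iota\in Y$. Assume toward contradiction that $Y=Y_1\cup\cdots\cup Y_m$ is a proper finite union of algebraic subsets, with each $Y_i=\V_\B(\Ss_i)\subsetneq Y$. For each $i$ pick a point of $Y\setminus Y_i$ and an equation $(t_i,s_i)\in\Ss_i$ it violates; then $t_i\sim_{Y_i} s_i$ but $t_i\not\sim_Y s_i$, i.e.\ $[t_i]\neq[s_i]$ in $\Gamma_\B(Y)$. Injectivity of $\iota$ gives $t_i(P_0)=\iota([t_i])\neq\iota([s_i])=s_i(P_0)$, so $P_0\notin Y_i$ for every $i$, contradicting $P_0\in Y$. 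Hence $Y$ is irreducible.

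\textbf{Necessity ($\Rightarrow$), the main obstacle.} Suppose $\Gamma_\B(Y)$ does not embed into $\B$. Then every $\phi_P$ $(P\in Y)$ has a non-trivial kernel $\theta_P$, a non-trivial congruence on the finite boolean algebra $\Gamma_\B(Y)$. The delicate step is to promote this pointwise non-injectivity to a \emph{single} finite covering of $Y$ by proper algebraic pieces. Since the congruence lattice of $\Gamma_\B(Y)$ is finite, it has only finitely many minimal non-trivial congruences $\theta^{(1)},\ldots,\theta^{(k)}$, and each $\theta_P$ contains at least one of them. Define
\[
Y_j=\{P\in Y:\theta^{(j)}\subseteq\theta_P\}=\{P\in Y: t(P)=s(P)\text{ whenever }[t]\mathrel{\theta^{(j)}}[s]\}.
\]
Each $Y_j$ is algebraic (cut out by $\Ss$ together with finitely many extra equations), and the discussion above yields $Y=Y_1\cup\cdots\cup Y_k$. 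Moreover $Y_j\subsetneq Y$: otherwise $\theta^{(j)}\subseteq\bigcap_{P\in Y}\theta_P$, but the latter is the identity congruence on $\Gamma_\B(Y)$ by construction of the quotient, contradicting $\theta^{(j)}\neq 0$. So $Y$ is reducible.

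The only conceptual hurdle is this finiteness-and-cover step in the reverse direction; everything else is a direct application of the duality between points of $Y$ and homomorphisms $\Gamma_\B(Y)\to\B$ together with the fact that the coordinate algebra is finite.
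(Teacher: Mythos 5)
Your proof is correct, but it takes a genuinely different route from the paper. The paper disposes of the theorem in two lines by citing Theorem A of Daniyarova--Miasnikov--Remeslennikov (\emph{$Y$ is irreducible iff $\Gamma_\B(Y)$ is discriminated by $\B$}) and then observing that for a finite algebra discrimination collapses to the existence of an embedding. You instead reprove the relevant special case of that theorem from scratch, using the duality between points of $Y$ and homomorphisms $\Gamma_\B(Y)\to\B$: in one direction an embedding $\iota$ produces a single point $P_0=P_\iota\in Y$ that simultaneously escapes every piece of any proposed proper cover (via $t_i(P_0)=\iota([t_i])\neq\iota([s_i])=s_i(P_0)$); in the other, the finitely many minimal nontrivial congruences of the finite boolean algebra $\Gamma_\B(Y)$ convert the pointwise non-injectivity of the maps $\phi_P$ into one explicit finite cover of $Y$ by proper algebraic subsets. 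The paper's version buys brevity and situates the result in the general framework; yours buys self-containment and makes visible exactly where finiteness of the coordinate algebra enters (it supplies the finitely many minimal congruences, hence a \emph{finite} decomposition). Both arguments are sound. For polish you might dispatch the degenerate case $Y=\emptyset$ first ($\Gamma_\B(Y)$ is then trivial, hence does not embed into a nontrivial $\B$, and $Y$ is not irreducible by definition), and note that under your hypothesis the number $k$ of minimal nontrivial congruences equals the rank of $\Gamma_\B(Y)$ and is automatically at least $2$ (rank $1$ would force $\Gamma_\B(Y)\cong\{0,1\}$, which embeds), so the cover you build really is a union of at least two proper pieces.
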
  
\begin{proof}
Actually, in~\cite{uniTh} (Theorem A) it was proved that  $\Gamma_\B(Y)$ is discriminated by  $\B$ iff the algebraic set $Y$ is irreducible. Since $\Gamma_\B(Y)$ is finite, the discrimination is equivalent to the embedding of  $\Gamma_\B(Y)$ into $\B$.
\end{proof}

There are different algebraic sets with isomorphic coordinate algebras. For example, the following algebraic sets
\[
Y_1=\V_\B(\{x_1x_2=x_2\}),\; Y_2=\V_\B(\{{x}_1{x}_2=x_1\})
\]
have isomorphic coordinate algebras  $\Gamma_\B(Y_1),\Gamma_\B(Y_2)$, since the second equation above is obtained from the first one by the variable substitution (in Example~\ref{ex:first} we directly compute the coordinate algebra of the sets $Y_1,Y_2$).
 
Following~\cite{uniTh}, an algebraic sets are \textit{isomorphic} if they have  isomorphic coordinate algebras.

\begin{example}
\label{ex:first}
Let us compute the coordinate algebra of the algebraic set $Y=\V_\B(x_1x_2=x_2)$, where $\B$ is an arbitrary nontrivial boolean algebra. By the definition, $\Gamma_\B(Y)$ is generated by the elements $x_1,x_2$ (we identify here a term $x_i$ with its $\sim_Y$-equivalence class). According to the axioms of boolean algebras, the equality  $x_1x_2=x_2$ gives that the term $\bar{x}_1 x_2$ equals $0$ in $\Gamma_\B(Y)$. The direct computations give that  $\Gamma_\B(Y)$ consists of $8$ elements ($\sim_Y$-equivalence classes)
\[
0,1,x_1,x_2,x_1\bar{x_2},\bar{x}_1,\bar{x}_2,x_2\vee\bar{x}_1.
\] 
Therefore,  $\Gamma_\B(Y)$ is isomorphic to a boolean algebra of rank $3$, and the elements $x_2$, $x_1\bar{x}_2$, $\bar{x}_1$ ($x_1$, $\bar{x}_1\vee x_2$, $\bar{x}_2$) are atoms  (respectively, co-atoms) of $\Gamma_\B(Y)$).

\end{example}

\section{Transformations of boolean equations}
\label{sec:transformations}

Let $X=\{x_1,x_2,\ldots,x_n\}$ be a finite set of variables. Let us define new variables  $Z=\{z_\alpha|\alpha\in\{0,1\}^n\}$ indexed by all $n$-tuples $\alpha\in\{0,1\}^n$ ($|Z|=2^n$). Following~\cite{rudeanu}, the variables $Z$ are called \textit{orthogonal}. By $\pi_i(\alpha)$ ($1\leq i\leq n$) we denote the projection of a tuple $\alpha\in\{0,1\}^n$ onto the $i$-th coordinate. The substitution of the variables $X=\{x_1,x_2,\ldots,x_n\}$ is the following
\begin{equation}
\label{eq:old_var_by_new}
x_i=\bigvee_{\pi_i(\alpha)=1}z_\alpha.
\end{equation}
For example, the set $X=\{x_1,x_2\}$ gives $Z=\{z_{(0,0)},z_{(0,1)},z_{(1,0)},z_{(1,1)}\}$ and 
\[
x_1=z_{(1,0)}\vee z_{(1,1)},\; x_2=z_{(0,1)}\vee z_{(1,1)}.
\]
According to the axioms of boolean algebras, it follows that the variables $Z$ are obtained from $X$ by the following rules:
\begin{equation}
\label{eq:new_var_by_old}
z_\alpha=x_1^{a_1}x_2^{a_2}\ldots x_n^{a_2},
\end{equation} 
where $\alpha=(a_1,a_2,\ldots,a_n)$, $a_i\in\{0,1\}$ and
\begin{equation}
x_i^{a_i}=\begin{cases}
x_i \mbox{ if } a_i=1,\\
\overline{x}_i \mbox{ if } a_i=0.
\end{cases}
\end{equation} 
For example, the sets $X=\{x_1,x_2\}$, $Z=\{z_{(0,0)},z_{(0,1)},z_{(1,0)},z_{(1,1)}\}$ give $z_{(0,0)}=\overline{x}_1\overline{x}_2$, $z_{(0,1)}=\overline{x}_1x_2$, $z_{(1,0)}={x}_1\overline{x}_2$, $z_{(1,1)}={x}_1x_2$. 

By~(\ref{eq:old_var_by_new}), any system $\Ss^\prime$ in variables $X=\{x_1,x_2,\ldots,x_n\}$ can be written as 
\begin{equation}
\Ss=\{z_\alpha=0\mid \alpha\in A\}\cup
\label{eq:S(Z)}
\bigcup_{\substack{ \alpha\neq\beta}}\{z_\alpha z_\beta=0\}\cup
\{\bigvee_{\alpha}z_\alpha=1\}, 
\end{equation}
where $A\subseteq\{0,1\}^n$ and $\bigvee_{\alpha}z_\alpha$ is the join of all variables $z_\alpha\in Z$ (see~\cite{shevl_boolean} for more details). 

Moreover, in~\cite{shevl_boolean} it was proved that the algebraic sets  $\V_\B(\Ss^\prime)$, $\V_\B(\Ss)$ are isomorphic. A system of the form~(\ref{eq:S(Z)}) is called  \textit{orthogonal}.

\begin{example}
The set $Y=\V_\B(x_1x_2=x_2)$ ($\B$ is an arbitrary boolean algebra) is isomorphic to the solution  set of a system
\begin{equation}
\label{eq:from_example}
\begin{cases}
z_{(0,1)}=0,\\
z_{(0,0)}z_{(0,1)}=z_{(0,0)}z_{(1,0)}=z_{(0,0)}z_{(1,1)}=z_{(0,1)}z_{(1,0)}=z_{(0,1)}z_{(1,1)}=
z_{(1,0)}z_{(1,1)}=0,\\
z_{(0,0)}\vee z_{(0,1)}\vee z_{(1,0)}\vee z_{(1,1)}=1
\end{cases}
\end{equation}
since
\[
x_1x_2=x_2\Leftrightarrow \bar{x}_1{x}_2=0\Leftrightarrow z_{(0,1)}=0.
\]
\end{example} 

\begin{statement}
The coordinate algebra of the solution set of an orthogonal system $\Ss$~(\ref{eq:S(Z)}) is isomorphic to the boolean algebra of rank $m-a$, where $m=|Z|=2^n$ and $a=|A|$. 
\label{st:coord_alg_of_ort_syst}
\end{statement}
\begin{proof}
Since all points $P_\alpha=(p_\beta\mid \beta\in\{0,1\}^n)$ ($\alpha\notin A$),
\[
p_\beta=\begin{cases}
1\mbox{ if }\beta=\alpha,\\
0\mbox{ otherwise}
\end{cases}
\] 
belong to  $Y=\V_\B(\Ss)$, the definition of the $\sim_Y$-equivalence gives that the elements $z_\alpha$ ($\alpha\notin A$) are nonzero in $\Gamma_\B(Y)$ and $z_\alpha\neq z_{\alpha^\prime}$ for distinct $\alpha,\alpha^\prime\notin A$). The equations $z_\alpha z_\beta=0\in\Ss$ imply that the elements $z_\alpha$ ($\alpha\notin A$) are exactly the atoms of the boolean algebra $\Gamma_\B(Y)$. Since the rank of a boolean algebra is equal to the number of atoms, $\Gamma_\B(Y)$ is isomorphic to the boolean algebra of rank  $m-a$.  
\end{proof}

\begin{example}
\label{ex:second}
According to Statement~\ref{st:coord_alg_of_ort_syst}, the coordinate algebra of the solution set of an orthogonal system $\Ss$~(\ref{eq:from_example}) is isomorphic to the boolean algebra of rank $3$ (in Example~\ref{ex:first} we directly obtained the same result). Using Theorem~\ref{th:gamma_is_embedded_for_irr}, we obtain that the set  $Y$ is irreducible over any boolean algebra of rank $r\geq 3$. 

If $\B$ is the boolean algebra of rank $2$ the solution set of~(\ref{eq:from_example}) is decomposable into the union of solution sets of the following systems 
\[
\Ss_1=\Ss\cup \{z_{(0,0)}=0\},\;\Ss_2=\Ss\cup \{z_{(1,0)}=0\},\;\Ss_3=\Ss\cup \{z_{(1,1)}=0\}.
\]
For the boolean algebra $\B$ of rank $2$ there are not nonzero elements $z_1,z_2,z_3\in\B$ with $z_iz_j=0$ ($i\neq j$). Therefore, for any solution of $\Ss$ one of the following equalities holds $z_{(0,0)}=0$, $z_{(1,0)}=0$, $z_{(1,1)}=0$. Thus,  $\V_\B(\Ss)$ can be decomposed into a union of solution sets of  $\Ss_1,\Ss_2,\Ss_3$.  
\end{example}

\bigskip

One can prove that for any algebraic set $Y\subseteq\B^n$ there exists a unique orthogonal system $\Ss$ in $m=2^n$ variables with the solution set isomorphic to  $Y$. Therefore, there arises a one-to-one correspondence between algebraic sets in $\B^n$ and orthogonal systems in $m=2^n$ variables. It allows us below to identify the class of algebraic sets in $\B^n$ and the class of all orthogonal systems in $m=2^n$ variables.

\section{Irreducible components of algebraic sets}
\label{sec:irred}
Let $Y$ be the solution set of $\Ss$~(\ref{eq:S(Z)}) over the boolean algebra $\B$ of rank $r$. Let  $m=|Z|=2^n$, $a=|A|$. 

\begin{lemma}
If $m-a\leq r$, then $Y$ is irreducible.
\label{l:when_Y_is_irred}
\end{lemma}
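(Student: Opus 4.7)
The plan is to chain together the two structural results already established: Statement~\ref{st:coord_alg_of_ort_syst}, which identifies the coordinate algebra of an orthogonal system, and Theorem~\ref{th:gamma_is_embedded_for_irr}, which characterizes irreducibility via embeddings of the coordinate algebra into $\B$. Since the hypothesis $m-a\leq r$ is a numerical comparison of ranks, it is natural to expect the proof to reduce to a straightforward embedding argument between finite boolean algebras.

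First, I would invoke Statement~\ref{st:coord_alg_of_ort_syst} to conclude that $\Gamma_\B(Y)$ is isomorphic to the boolean algebra of rank $m-a$. Next, I would use the standard fact that whenever $s\leq r$, the boolean algebra of rank $s$ embeds into the boolean algebra of rank $r$: concretely, if we identify the rank-$r$ algebra with the power set of an $r$-element set $U$, we may fix any $s$-element subset $U'\subseteq U$ and send a subset $V\subseteq U'$ to $V\subseteq U$, which is an $\LL$-homomorphism that is injective. Applied with $s=m-a$, this produces an embedding $\Gamma_\B(Y)\hookrightarrow\B$.

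Finally, by Theorem~\ref{th:gamma_is_embedded_for_irr}, the existence of such an embedding is equivalent to the irreducibility of $Y$ over $\B$, which completes the argument.

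Honestly, there is no real obstacle: the lemma is essentially a bookkeeping consequence of the two preceding results, and the only thing to check is the trivial observation that ranks of finite boolean algebras control embeddings. The interesting content is already packaged into Statement~\ref{st:coord_alg_of_ort_syst} and Theorem~\ref{th:gamma_is_embedded_for_irr}.
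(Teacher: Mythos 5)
Your overall route is exactly the paper's: the paper proves this lemma by citing Statement~\ref{st:coord_alg_of_ort_syst} (so $\Gamma_\B(Y)$ is the boolean algebra of rank $m-a$) together with Theorem~\ref{th:gamma_is_embedded_for_irr}, and the only content left to check is that a boolean algebra of rank $s$ embeds into one of rank $r$ whenever $s\leq r$. That fact is true, so your conclusion stands.

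However, the explicit embedding you wrote down is not correct. The map sending $V\subseteq U'$ to $V\subseteq U$ preserves $\cup$, $\cap$ and $0$, but it is not a homomorphism in the language $\LL=\{\vee,\cdot,\bar{\ },0,1\}$: it sends the top element $U'$ of the small algebra to $U'\neq U$, and it does not commute with complementation (the complement of $V$ in $P(U')$ is $U'\setminus V$, which is not $U\setminus V$). So what you constructed is only a lattice embedding, not a boolean algebra embedding, and Theorem~\ref{th:gamma_is_embedded_for_irr} needs the latter. The standard repair: choose a surjection $f\colon U\to U'$ (equivalently, partition the $r$ atoms of $\B$ into $s=m-a$ nonempty blocks) and map $V\subseteq U'$ to $f^{-1}(V)\subseteq U$; this preserves all operations, including $1$ and complement, and is injective. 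With that one-line correction your argument matches the paper's proof.
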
 
\begin{proof}
It directly follows from Statement~\ref{st:coord_alg_of_ort_syst} and Theorem~\ref{th:gamma_is_embedded_for_irr}.
\end{proof}

\begin{lemma}
Let $m-a>r$ then $Y$ is a union of solution sets of the following orthogonal systems
\begin{equation}
\Ss_{B}=\{z_\alpha=0\mid \alpha\in B\}\cup
\label{eq:S_B}
\bigcup_{\substack{ \alpha\neq\beta}}\{z_\alpha z_\beta=0\}\cup
\{\bigvee_{\alpha}z_\alpha=1\}
\end{equation}
where $B\subseteq\{0,1\}^n$, $B\supseteq A$, $|B|=m-r$.
Moreover, the sets  $Y_B=\V_\B(\Ss_B)$ are irreducible components of  $Y$.
\label{l:when_Y_is_red}
\end{lemma}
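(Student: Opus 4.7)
The plan is to prove the lemma in three steps: first the set-theoretic decomposition $Y = \bigcup_B Y_B$, then irreducibility of each $Y_B$, and finally irredundancy of the decomposition.

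For the decomposition, the inclusion $Y_B \subseteq Y$ is immediate since $\Ss_B \supseteq \Ss$. For the reverse, take $P = (p_\alpha) \in Y$. The orthogonality equations $z_\alpha z_\beta = 0$ ($\alpha \ne \beta$) and $\bigvee_\alpha z_\alpha = 1$ in $\Ss$ force the $p_\alpha$ to be a pairwise disjoint collection of elements of $\B$ whose join is $1$. In a boolean algebra of rank $r$, any pairwise disjoint family of nonzero elements has size at most $r$, because each such element dominates an atom and distinct disjoint elements dominate distinct atoms. Hence the set $C = \{\alpha : p_\alpha = 0\}$ satisfies $|C| \ge m - r$; since $P \in Y$ we also have $A \subseteq C$. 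Because the hypothesis gives $|A| = a < m - r$, we may pick any $B$ with $A \subseteq B \subseteq C$ and $|B| = m - r$, and then $P \in Y_B$.

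For irreducibility of $Y_B$, I would simply apply Lemma~\ref{l:when_Y_is_irred} to the orthogonal system $\Ss_B$: the parameter $a$ for $\Ss_B$ equals $|B| = m - r$, so $m - |B| = r \le r$, and Lemma~\ref{l:when_Y_is_irred} yields irreducibility.

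The main (and only mildly delicate) obstacle is showing that the $Y_B$ are the actual irreducible components, i.e.\ $Y_B \not\subseteq Y_{B'}$ for distinct $B, B'$. Since $|B| = |B'|$, there exists $\alpha_0 \in B' \setminus B$. Let $e_1, e_2, \dots, e_r$ be the atoms of $\B$; since $|\{0,1\}^n \setminus B| = r$, fix any bijection from this set to the atoms that sends $\alpha_0 \mapsto e_1$, and define a point $P = (p_\alpha)$ by $p_\alpha = 0$ for $\alpha \in B$ and $p_\alpha =$ the chosen atom otherwise. Then $P$ satisfies all equations of $\Ss_B$ (atoms are pairwise disjoint and join to~$1$), so $P \in Y_B$; but $p_{\alpha_0} = e_1 \ne 0$ violates the equation $z_{\alpha_0} = 0$ of $\Ss_{B'}$, so $P \notin Y_{B'}$. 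Combined with the decomposition and irreducibility, this identifies the $Y_B$ as the irreducible components of $Y$ via the uniqueness statement~(\ref{eq:Y_irred_components}).
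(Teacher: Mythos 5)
Your proof is correct and follows essentially the same three-step structure as the paper's: the same decomposition $Y=\bigcup_B Y_B$ via counting nonzero (pairwise disjoint) coordinates, irreducibility of each $Y_B$ from Statement~\ref{st:coord_alg_of_ort_syst} and Theorem~\ref{th:gamma_is_embedded_for_irr}, and irredundancy via an explicit witness point (the paper uses the simpler point with a single coordinate equal to $1$, but your atom-valued point works just as well).
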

\begin{proof}
Actually, the statement of this lemma was demonstrated in Example~\ref{ex:second}, where the solution set of $\Ss$ over the boolean algebra of rank $2$ is a union of the solution sets of the systems $\Ss_1,\Ss_2,\Ss_3$. For the systems $\Ss_1,\Ss_2,\Ss_3$ the set $B$ respectively equals $\{(0,0),(0,1)\}$,  $\{(1,0),(0,1)\}$, $\{(1,1),(0,1)\}$.

\bigskip

The proof of the lemma follows from the statements below.

\begin{enumerate}
\item {\it Let us prove $Y=\bigcup_{B} Y_B$}. Since the systems $\Ss_B$ contain new equalities  $z_\alpha=0$, then obviously $\V_\B(\Ss_B)\subseteq \V_\B(\Ss)$ and $\bigcup_{B} Y_B\subseteq Y$.

Let us prove the inverse inclusion $Y\subseteq \bigcup Y_B$. Let  $P=(p_\alpha\mid \alpha\in\{0,1\}^n)\in Y$. Since $p_\alpha p_\beta=0$ for all  $\alpha\neq \beta$, then $P$ contains at most $r$ nonzero coordinates (and at least $m-r$ zero coordinates). Therefore, there exists a set  $B\subseteq \{0,1\}^n$, $|B|=m-r$, $B\supseteq A$ such that $p_\beta=0$ for all  indexes $\beta\in B$, and therefore $P\in Y_{B}$. 

\item Statement~\ref{st:coord_alg_of_ort_syst} implies that the coordinate algebras of algebraic sets $Y_B$ are isomorphic to $\B$. By Theorem~\ref{th:gamma_is_embedded_for_irr}, {\it all sets $Y_B$ are irreducible}.

\item Let us prove that {\it $Y_B\nsubseteq Y_{B^\prime}$ for distinct sets $B,B^\prime$}. Let $\beta\in B\setminus B^\prime$. Then the point $P=(p_\alpha\mid \alpha\in\{0,1\}^n)$ with coordinates
\[
p_\alpha=\begin{cases}
1\mbox{ if }\alpha=\beta\\
0\mbox{ otherwise }
\end{cases}
\]
belongs to $Y_{B^\prime}$, but $P\notin Y_B$.  
\end{enumerate}

\end{proof}

\section{Average number of irreducible components}
\label{sec:average_of_irred}
In this section we obtain a formula for the average number of irreducible components of algebraic sets defined by orthogonal systems~(\ref{eq:S(Z)}) over the boolean algebra  $\B$ of rank $r$. Let $m$ be the number of variables in the orthogonal system $\Ss$~(\ref{eq:S(Z)}) and $a=|A|$. According to Lemmas~\ref{l:when_Y_is_irred},~\ref{l:when_Y_is_red}, the number of irreducible components $\mathrm{Irr}(\Ss)$ of the solution set of $\Ss$ equals
\[
\mathrm{Irr}(\Ss)=\begin{cases}
1\mbox{ if }m-a\leq r\\
\binom{m-a}{r}\mbox{ otherwise }
\end{cases}
\]

The number of orthogonal systems for fixed $m,a$ equals $\binom{m}{a}$ 
The number of all orthogonal systems is $2^m$, therefore the average number of irreducible components of algebraic sets defined by orthogonal systems in $m$ variables equals
\[
\overline{\mathrm{Irr}}=
\frac{1}{2^m}\left({1\cdot\sum_{a=m-r}^{m}\binom{m}{a}+\sum_{a=0}^{m-r-1}\binom{m}{a}\binom{m-a}{r}}\right).
\]
We have
\begin{multline*}
\sum_{a=0}^{m-r-1}\binom{m}{a}\binom{m-a}{r}=\sum_{a=0}^{m-r-1}\binom{m}{m-a}\binom{m-a}{r}=\\
\binom{m}{r}\sum_{a=0}^{m-r-1}\binom{m-r}{m-r-a}=\binom{m}{r}\sum_{a=0}^{m-r-1}\binom{m-r}{a}=
\binom{m}{r}(2^{m-r}-1),
\end{multline*}
and, therefore, the average number of irreducible components is 
\begin{multline*}
\overline{\mathrm{Irr}}=
\frac{1}{2^m}\left({\sum_{a=m-r}^{m}\binom{m}{a}+\binom{m}{r}(2^{m-r}-1)}\right)=
\frac{1}{2^m}\left({\sum_{a=m-r+1}^{m}\binom{m}{a}+2^{m-r}\binom{m}{r}}\right)=\\
\frac{1}{2^m}\left({\sum_{i=0}^{r-1}\binom{m}{i}+2^{m-r}\binom{m}{r}}\right).
\end{multline*}
For a fixed $r$ and $m\to \infty$ we have $\frac{1}{2^m}\sum_{i=0}^{r-1}\binom{m}{i}\to 0$ and
\[
\overline{\mathrm{Irr}}\sim 2^{-r}\binom{m}{r}\mbox{ for }m\to \infty.
\]

\section{Ranks of irreducibility}
\label{sec:rank}
According to Lemmas~\ref{l:when_Y_is_irred},~\ref{l:when_Y_is_red}, the solution set of a system $\Ss$~(\ref{eq:S(Z)}) may be reducible over the boolean algebra of rank $r$, but the solution set of $\Ss$ becomes irreducible over the boolean algebras of higher ranks. We say that a system $\Ss$~(\ref{eq:S(Z)}) has  {\it the rank of irreducibility} $IR(\Ss)$ if  the solution set of $\Ss$ is irreducible over the boolean algebra of rank  $IR(\Ss)$, but solution set of $\Ss$ is reducible over each boolean algebra of rank $r<IR(\Ss)$ (if  $\Ss$ is inconsistent over any boolean algebra we put $IR(\Ss)=0$). Below we compute the average rank of irreducibility of orthogonal systems in $m$ variables. 

By Lemmas~\ref{l:when_Y_is_irred},~\ref{l:when_Y_is_red}, we have that $IR(\Ss)$ of a system $\Ss$~(\ref{eq:S(Z)}) equals $m-a$, $a=|A|$. The number of orthogonal systems in $m$ variables with the rank of irreducibility $m-a$ is equal to $\binom{m}{a}$. Therefore, the average rank of irreducibility of orthogonal systems in $m$ variables is
\[
2^{-m}\sum_{a=0}^m(m-a)\binom{m}{a}=2^{-m}\left(m\sum_{a=0}^m \binom{m}{a}-\sum_{a=0}^m a\binom{m}{a}\right)=2^{-m}\left(m2^m-m2^{m-1}\right)=m/2.
\]

\section{Pairs of isomorphic algebraic sets}
\label{sec:isomorphic}
In this section we compute the number of pairs $(Y_1,Y_2)$ such that the algebraic sets $Y_i$ are isomorphic to each other and $Y_i$ are defined by orthogonal systems in $m$ variables.

Suppose algebraic sets  $Y_i$ are defined by the following orthogonal systems
\begin{equation}
\Ss_i=\{z_\alpha=0\mid \alpha\in A_i\}\cup
\bigcup_{\substack{ \alpha\neq\beta}}\{z_\alpha z_\beta=0\}\cup
\{\bigvee_{\alpha}z_\alpha=1\},
\label{eq:S_i}
\end{equation}
where $A_i\subseteq\{0,1\}^n$

The following lemma is a simple corollary of Statement~\ref{st:coord_alg_of_ort_syst}.

\begin{lemma}
Algebraic sets $Y_1,Y_2$ defined by orthogonal systems $\Ss_1,\Ss_2$~(\ref{eq:S_i}) are isomorphic to each other iff  $|A_1|=|A_2|$.
\label{l:when_two_AS_isomorphic}
\end{lemma}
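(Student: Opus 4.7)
The plan is to reduce the isomorphism question directly to an equality of numerical invariants via Statement~\ref{st:coord_alg_of_ort_syst}. By definition, two algebraic sets are isomorphic precisely when their coordinate algebras are isomorphic, so it suffices to control $\Gamma_\B(Y_1)$ and $\Gamma_\B(Y_2)$ up to isomorphism.

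First, I would invoke Statement~\ref{st:coord_alg_of_ort_syst} to conclude that $\Gamma_\B(Y_i)$ is isomorphic to the (finite) boolean algebra of rank $m - |A_i|$, where $m = 2^n$. The backward direction then follows immediately: if $|A_1| = |A_2|$, then $\Gamma_\B(Y_1)$ and $\Gamma_\B(Y_2)$ are finite boolean algebras of the same rank, hence isomorphic (since a finite boolean algebra is determined up to isomorphism by its rank, being isomorphic to the power set algebra on that many atoms), and therefore $Y_1 \cong Y_2$.

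For the forward direction, suppose $Y_1 \cong Y_2$, so $\Gamma_\B(Y_1) \cong \Gamma_\B(Y_2)$ as boolean algebras. Then these algebras have the same rank, and comparing to Statement~\ref{st:coord_alg_of_ort_syst} yields $m - |A_1| = m - |A_2|$, i.e.\ $|A_1| = |A_2|$.

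The only potential obstacle is the boundary case when $|A_i| = m$, in which case the orthogonal system $\Ss_i$ is inconsistent (it forces every $z_\alpha = 0$ while demanding $\bigvee_\alpha z_\alpha = 1$), so $Y_i = \emptyset$ and the notion of coordinate algebra degenerates. This case must be handled separately: if $|A_1| = |A_2| = m$, both sets are empty and trivially isomorphic (by convention, or by noting that both systems have the same, empty, solution set); conversely, an empty algebraic set can only be isomorphic to another empty one, and by Statement~\ref{st:coord_alg_of_ort_syst} emptiness corresponds to $|A| = m$. Once this edge case is dispatched, the main argument above gives the equivalence.
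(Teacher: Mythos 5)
Your proof is correct and follows essentially the same route as the paper: reduce isomorphism of algebraic sets to isomorphism of coordinate algebras and apply Statement~\ref{st:coord_alg_of_ort_syst}, using that a finite boolean algebra is determined by its rank. Your explicit treatment of the degenerate case $|A_i|=m$ (inconsistent system, empty solution set) is a small point of care that the paper's proof passes over silently.
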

\begin{proof}
In~\cite{uniTh} (Corollary 5.7) it was proved that that $Y_1,Y_2$ are isomorphic iff their coordinate algebras are the isomorphic. The application of Statement~\ref{st:coord_alg_of_ort_syst} concludes the proof. 
\end{proof}

\medskip

The number of pairs $(\Ss_1,\Ss_2)$ with $|A_1|=|A_2|$ is equal to 
\[
\sum_{i=0}^m\binom{m}{i}\binom{m}{i}=\binom{2m}{m}.
\]

Since there are exactly $2^m2^m=4^m$ pairs of algebraic sets defined by orthogonal systems in $m$ variables, two random algebraic sets are isomorphic with the following probability
\[
\frac{\binom{2m}{m}}{2^m\cdot 2^m}.
\]
Applying Stirling formula to the expression $\binom{2m}{m}$, we obtain that the required probability asymptotically equals $\frac{1}{\sqrt{\pi m}}$.

The information of the author:

Artem N. Shevlyakov

Sobolev Institute of Mathematics

644099 Russia, Omsk, Pevtsova st. 13

Phone: +7-3812-23-25-51.

\bigskip

Omsk State Technical University

644050 Russia, Omsk, pr. Mira, 11

\bigskip

e-mail: \texttt{a\_shevl@mail.ru}
\end{document}